\let\pa\partial
\newcommand{\R}{{\mathbb R}}
\newcommand{\K}{{\mathcal K}}
\newcommand{\renyi}{{\mathcal{R}}}
\newcommand{\dd}{{\mathrm{d}}}
\newcommand{\E}{\mathcal{E}}
\newcommand{\M}{\Theta}
\newcommand{\J}{\mathcal{J}}
\newcommand{\I}{\mathcal{I}}
\newcommand{\entropy}{\mathcal{H}}
\newcommand{\renep}{\mathcal{N}}
\theoremstyle{plain}
\newtheorem{theorem}{Theorem}[section]   
\newtheorem{lemma}[theorem]{Lemma}   
\newtheorem{proposition}[theorem]{Proposition}
\theoremstyle{definition}
\newtheorem{definition}{Definition}[section]
\theoremstyle{remark}
\newtheorem{remark}{Remark}[section]
\date{version of \today}
\begin{document}

\title[Improved equilibration rates to self-similarity]{Improved equilibration rates to self-similarity\\ for strong solutions of\\ a thin-film and related evolution equations}
 
\author{Mario Bukal$^1$}
\address[1]{University of Zagreb,
Faculty of Electrical Engineering and Computing\newline
Unska 3, 10000 Zagreb, Croatia}
\email{mario.bukal@fer.hr}

\thanks{This work has been supported by the Croatian Science
Foundation under project IP-2022-10-2962, and by the University of Zagreb Faculty of Electrical Engineering and Computing under project DEEPWAVE}

\keywords{thin-film equation, fourth-order evolution equations, R\'enyi entropies, entropy power inequalities, super-exponential decay}

\subjclass[2010]{35B40, 35K65, 35Q35, 26D10}

\begin{abstract}
This paper investigates the asymptotic behavior of strong solutions to a family of nonlinear fourth-order evolution equations on the real line, with particular focus on the thin-film equation $\partial_tu = -(uu_{xxx})_x$. The method builds on the framework introduced by Carrillo and Toscani ({\em Nonlinearity} 27 (2014), 3159) for second-order nonlinear diffusion equations --- by introducing a time-dependent rescaling that preserves the second moment, we establish sharp convergence rates toward the steady state in terms of the relative R\'enyi entropy. Compared to rates derived from the dissipation of the classical relative entropy, this approach yields improved estimates at early and intermediate times, and consequently a sharper convergence in the $L^1$-norm.
The method is developed at a formal level for the family of fourth-order equations, including the well-known Derrida-Lebowitz-Speer-Spohn (DLSS) equation, but can be rigorously justified for strong solutions of the thin-film equation.
\end{abstract} 

\date{\today}

\maketitle

\section{Introduction}
In this paper we consider the asymptotic behavior of solutions to the Cauchy problem for a nonlinear fourth-order evolution equation
\begin{equation}\label{1.eq:tfe}
    \pa_t u = -\left(uu_{xxx}\right)_{x}\,, \quad x\in\R\,,\ t>0\,,
\end{equation}
given a nonnegative initial data 
\begin{equation}\label{1.eq:id}
    u(x,0) = u_0(x)\geq 0\,,\quad x\in\R\,
\end{equation}
of unit mass and of finite second moment. This is an old and important question that has been studied repeatedly in the literature. Briefly, Smyth and Hill were the first to explicitly calculate self-similar source-type solutions to \eqref{1.eq:tfe} \cite{SmHi88}. After the development of the fundamental theory \cite{BBD95, Ber96, BeFr90, BePu96} for equation \eqref{1.eq:tfe}, Carrillo and Toscani proved the convergence of strong solutions towards the unique source-type solution with an explicit and universal algebraic rate \cite{CaTo02}. Later, this result has been extended to the notion of weak solutions in the framework of the $L^2$-Wasserstein gradient flows \cite{MMS09}. 
In this paper, we improve the result obtained by Carrillo and Toscani in \cite{CaTo02}. 

Equation \eqref{1.eq:tfe} appeared in \cite{CDGKSZ93} as a simplified model describing a two-dimensional droplet breakup in a Hele-Shaw cell and has since received significant attention in the mathematical community. The graph of the unknown function $u(x,t)$ describes the dynamics of the interface between a thin fluid neck of thickness $2u(x,t)$ and the surrounding air. The equation is naturally studied within a wider class of thin-film equations 
\begin{equation}\label{1.eq:tfen}
    \pa_t u = -\left(u^nu_{xxx}\right)_{x}\,, 
\end{equation}
parametrized by a positive parameter $n$, formally describing the degree of slip of fluid films. The case of $n=3$ corresponds to the no-slip condition on the fluid velocity at the substrate, $n=2$ to the Navier slip condition, while $n=1$ (our case) corresponds to the full slip or the Darcy flow \cite{Ber98, Mye98, ODB97}.
The first rigorous existence analysis of equations \eqref{1.eq:tfen} was performed by Bernis and Friedman \cite{BeFr90}. Further development of the theory discussing the regularity of weak solutions, the long-term behavior, the finite speed of propagation, and other fine properties under different boundary conditions emerged in \cite{BeGr05, BBD95, Ber96, BePu96, GiOt01}. 

Besides \eqref{1.eq:tfen}, equation \eqref{1.eq:tfe} belongs to another parametrized family of fourth-order equations, 
\begin{align}\label{1.eq:WGF}
    \pa_t u = -
    \left(u\left(u^{p-3/2}(u^{p-1/2})_{xx}\right)_x\right)_x\,
\end{align}
for $p=3/2$. This family first appeared in \cite{DeMc08} in connection with the second-order porous medium equations  
\begin{equation*}
    \pa_tu = \left(u^p\right)_{xx}\,.
\end{equation*}
Some explicit solutions have been constructed and their short- and long-term be\-ha\-vi\-or was discussed. Afterwards, this connection has been deepen in \cite{MMS09} by justifying the $L^2$-Wasserstein gradient flow structure of \eqref{1.eq:WGF} and obtaining optimal equilibration rates of constructed weak solutions to the self-similar solution.
Let us point out that the family \eqref{1.eq:WGF} has another prominent and well-studied member, namely the Derrida-Lebowitz-Speer-Spohn (DLSS) equation \cite{DLSS91}, obtained for $p=1$ 
\begin{equation*}
    \pa_t u = -\left(u\left(\frac{(\sqrt{u})_{xx}}{\sqrt{u}}\right)_x\right)_x\,.
\end{equation*}
In fact, this form of the DLSS equations appears in the context of local approximations of quantum drift-diffusion models \cite{DMR05}. Its gradient flow structure has been justified in \cite{GST09} and the basic existence theory on bounded domains accompanied by qualitative properties developed in \cite{BLS94, CCT05, Fisch13, Fisch14, JuMa08, JuPi00}.

The family of equations \eqref{1.eq:WGF} can be formally (for smooth and positive solutions) written in a symmetric form \cite{MMS09}
\begin{equation}\label{1.eq:lnp_sym}
    \pa_t u = -\frac{1}{p-1}\left(u^p\left(u^{p-1}\right)_{xx}\right)_{xx}\,,\quad x\in\R\,,\ t>0\,,\quad\text{for } p\neq 1\,,
\end{equation}
which will be the focus of the present paper. We will formally consider the asymptotic behavior of solutions to \eqref{1.eq:lnp_sym} with initial data $u_0$ of unit mass and of finite second moment. For $p=1$, \eqref{1.eq:lnp_sym} turns into the original logarithmic form of the DLSS equation \cite{DLSS91}
\begin{equation}\label{1.eq:dlss_log}
    \pa_t u = -\left(u\left(\ln u\right)_{xx}\right)_{xx}\,, 
\end{equation}
which has been recently formally interpreted as another gradient flow structure related to the diffusive transport \cite{MRSS25, MRS25}. Eventually, equations \eqref{1.eq:lnp_sym} for $1\leq p <2$ were analyzed in \cite{Buk16}, and the existence of global nonnegative weak solutions under periodic boundary conditions was proved together with their exponential equilibration to the constant steady state.

Now we turn back to the question of the long-term behavior of solutions to equation \eqref{1.eq:tfe} and \eqref{1.eq:lnp_sym} in general, and provide a brief literature review. Smyth and Hill \cite{SmHi88} formally calculated a self-similar attractor given by
\begin{equation}\label{1.def:sssts}
    U(x,t) = \frac{1}{24\, t^{1/5}}\left(C - \frac{x^2}{t^{2/5}}\right)_+^2\,,
\end{equation}
where $C$ is a positive constant determined by the unit mass constraint and $(\cdot)_+$ denotes the positive part of a quantity. Analogously, self-similar solutions of \eqref{1.eq:lnp_sym} (in fact \eqref{1.eq:WGF}) were calculated in \cite{DeMc08},
\begin{equation}\label{1.def:self-sim-sol}
    U(x,t) = \frac{a_p}{t^{1/(2p+2)}}\left(C - (p-1)\frac{x^2}{t^{1/(p+1)}}\right)_+^{\frac{1}{p-1}}\,,\quad a_p = \left(2p(2p-1)\right)^{-1/(2p-2)}\,,\quad p\neq 1\,.
\end{equation}
For $p=1$, the latter turns into the corresponding Gaussian density. 
Note that self-similar solutions \eqref{1.def:sssts} and \eqref{1.def:self-sim-sol} are properly rescaled Barenblatt profiles describing the long-term behavior of solutions to the porous medium equation \cite{Vaz07}.


The thin-film equation \eqref{1.eq:tfe} in symmetric form reads 
\begin{equation*}
    \pa_t u = -2\left(u^{3/2}\left(u^{1/2}\right)_{xx}\right)_{xx}\,.
\end{equation*} 
A distinguishing feature, not shared by other members of the family \eqref{1.eq:lnp_sym}, is that this reformulation can be made rigorous for strong solutions of the thin-film equation \cite{BBD95, BePu96}.
Precisely that was the key point in \cite{CaTo02} to prove the convergence of strong solutions to self-similar solutions \eqref{1.def:sssts} in the $L^1$-norm at an algebraic decay $O(t^{-1/5})$. 
We briefly outline the main steps of the proof. Employing a self-similar rescaling of equation \eqref{1.eq:tfe} in terms of 
\begin{equation} \label{1.def:rescaling1}
    w(x,t) = e^t u(e^tx, (e^{5t}-1)/5)\,,\qquad x\in\R\,,\ t>0\,.
\end{equation}
leads to a Fokker-Planck-type equation
\begin{equation}\label{1.eq:tf_resc}
\pa_t w = -2\left(w^{3/2}\left(w^{1/2}\right)_{xx}\right)_{xx} + (xw)_x\,.    
\end{equation}
The evolution of the relative entropy along strong solutions to \eqref{1.eq:tf_resc} reads as
\begin{align}\label{1.eq:Hev}
    \frac{\dd}{\dd t} \entropy(w(t)|B) = - D(w)\,,
\end{align}
where $\entropy(w|B) = \entropy(w) - \entropy(B)$ with 
\begin{equation*}
    \entropy(w) = \int_{\R}\left(\frac{x^2}{2}w + \sqrt{\frac{8}{3}}w^{3/2}\right)\dd x\,,\qquad B(y) = \frac{1}{24}\left(C - y^2\right)_+^2\,,
\end{equation*}
and the entropy dissipation is given by
\begin{equation*}
    D(w) = \int_{\R\cap \{w>0\}} w\left(\frac{x^2}{2} + \sqrt{6}w^{1/2}\right)_x^2\dd x + \frac{2}{\sqrt{6}}\int_{\R\cap \{w>0\}} w^{3/2}\left(\frac{x^2}{2} + \sqrt{6}w^{1/2}\right)_{xx}^2\dd x\,.
\end{equation*}
The identity \eqref{1.eq:Hev} is rigorously justified for strong solutions of \eqref{1.eq:tf_resc}. Relating the relative entropy to the entropy dissipation via a generalized logarithmic-Sobolev inequality,
\begin{equation*}
    \entropy(w|B) \leq \frac{1}{2}D(w)\,,
\end{equation*}
leads to the exponential decay of the relative entropy. Finally, utilizing the Csisz\'ar-Kullback inequality \cite{CJMTU01} and returning to the original variables finishes the proof. 

Analogous result was obtained in \cite{MMS09} for weak solutions of \eqref{1.eq:WGF}, but with methods from the gradient flow theory. Namely, the weak solutions constructed in \cite{MMS09} do not possess sufficient regularity to perform explicit calculations using the symmetric form \eqref{1.eq:lnp_sym}.
In \cite{McSe15}, a rescaled version of the equation \eqref{1.eq:WGF} was formally linearized around its steady state and using the known results for the porous medium equations a complete asymptotic expansion of solutions to \eqref{1.eq:WGF} has been conjectured for long times. 
Exponential convergence to the steady state of weak solutions to the rescaled thin-film equation in a weighted higher-order (energy) norm measuring smoothness and localization was obtained in \cite{CaUl14}. Finally, we quote \cite{Gna15}, where the global existence, uniqueness, and asymptotic stability for the corresponding free boundary problem of the thin-film equation was established and proved that small perturbations of self-similar solution \eqref{1.def:sssts} converge to \eqref{1.def:sssts} with improved decay rates.

In the present paper we explore yet another idea by Carrillo and Toscani \cite{CaTo14}. Instead of rescaling \eqref{1.def:rescaling1}, which preserves only mass and the first moment of solutions, in \cite{CaTo14} they introduced a novel rescaling of porous medium equations that also preserves the second moment in time. Such a rescaling then leads to a nonlocal Fokker-Planck-type equations. Furthermore, instead of the usual Newman-Ralston relative entropies \cite{New84}, they consider the evolution of relative R\'enyi entropies and invoke the concavity property of the R\'enyi entropy power \cite{SaTo14} to obtain the evolution inequality for the relative R\'enyi entropy. The obtained inequality leads to a super-exponential convergence to the corresponding steady state. For early and intermediate times, this result improves the rate of convergence given in terms of the usual relative entropy and thus, improves the rate of convergence in the $L^1$-norm for porous medium equations. 
Here we formally follow this method for the family of equations \eqref{1.eq:lnp_sym} for $1\leq p \leq 3/2$ and obtain a formal improvement of the result from \cite{MMS09}. Formally, since the regularity of weak solutions from \cite{MMS09} is insufficient to justify all the steps of the method. However, as in \cite{CaTo02}, the method can be made rigorous for strong solutions of \eqref{1.eq:tfe}, and thus, we improve the result of \cite{CaTo02}.

The paper is organized as follows. In Section \ref{sec:2} we perform basic calculations: derive nonlocal rescaled equations, calculate steady states, introduce R\'enyi entropies and collect key functional inequalities needed for the method. Section \ref{sec:3} is devoted to the proof of our main result formulated in Theorem \ref{3.tm}. The justification of formal calculations for the thin-film equation is carried out in Section \ref{sec:thin-film}, and Section \ref{sec:dlss} underlines the result for the DLSS equation.

\section{Nonlocal equations, steady states and R\'enyi entropies}\label{sec:2}

Let us write the family of equations \eqref{1.eq:lnp_sym} (including \eqref{1.eq:dlss_log}) in a more compact form
\begin{equation}\label{2.eq:lnp}
    \pa_t u = -\left(u^p\left(\ln_p u\right)_{xx}\right)_{xx}\,, \quad x\in\R\,,\ t>0\,,
\end{equation}
where $\ln_p$ denotes a $p$-logarithm given by
\begin{equation*}
    \ln_p u = \frac{u^{p-1} - 1}{p-1}\,,\quad \text{for }p\neq 1\,.
\end{equation*}
For $p=1$, $\ln_1 u = \ln u$ denotes the natural logarithm.
\subsection{Change of variables}
Consider the second moment
\begin{equation*}
    \M(u) = \int_\R x^2u\, \dd x
\end{equation*}
and its evolution along solutions to \eqref{2.eq:lnp}
\begin{align*}
    \frac{\dd }{\dd t}\M(u(t)) &= -\int_\R x^2\left(u^p\left(\ln_{p}u\right)_{xx}\right)_{xx} \dd x =
    -2\int_\R u^p \left(\ln_{p}u\right)_{xx} \dd x\\ \nonumber
    &= 2p \int_\R u\left(\ln_{p}u\right)_{x}^2\dd x =: 4p\, \I_p(u)\,.
\end{align*}
This defines a generalization of the Fisher information $\I_p(u)$.
Above, we formally used the identity (for $p\neq 1$)
\begin{equation*}
    (u^p)_x =  p\, u\left(\frac{u^{p-1} - 1}{p-1}\right)_{x} = p\;  u\left(\ln_{p}u\right)_{x}\,.
\end{equation*}

Define $v(y,\tau)$ by the following self-similar rescaling of $u(x,t)$ \cite{CaTo14}:
\begin{equation}\label{1.def:v}
    u(x,t) = \left(\frac{\M(u(t))}{\M_0}\right)^{-1/2} v \left(x\left(\frac{\M(u(t))}{\M_0}\right)^{-1/2}, \tau(t)\right),
\end{equation}
where $\M_0 = \int_\R x^2 u_0(x)\dd x$ is the second moment of the initial data and the rescaled time is given by
\begin{equation}\label{1.def:tau}
    \tau(t) = \frac{\M_0}{4p}\ln\left(\frac{\M(u(t))}{\M_0}\right).
\end{equation}
Then the change of variables reveals that
\begin{equation*}
    \M(u(t)) = \int_\R x^2u(x,t)\, \dd x = \frac{\M(u(t))}{\M_0} \int_\R y^2 v(y,\tau)\dd y\,.
\end{equation*}
Hence, for every $\tau > 0$
\begin{equation*}
   \M(v(\tau)) = \int_\R y^2 v(y,\tau)\dd y = \M_0\,.
\end{equation*}
\begin{proposition}
    Let $u(x,t)$ be a solution to equation \eqref{2.eq:lnp}, then function $v(y,\tau)$ defined by \eqref{1.def:v} satisfies a nonlocal fourth-order equation
    \begin{equation}\label{2.eq:v_p}
        \pa_\tau v = -\frac{1}{\I_p(v)}\left(v^p\left(\ln_{p}v\right)_{yy}\right)_{yy} + \frac{2p}{\M_0}(y v)_y\,.
    \end{equation}
\end{proposition}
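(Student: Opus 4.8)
The plan is to treat the proposition as a change of variables and to make sure that the time-dependent factors generated by differentiating the scaling collapse exactly onto the autonomous coefficients $1/\I_p(v)$ and $2p/\M_0$ appearing in \eqref{2.eq:v_p}. Write $\lambda(t):=\bigl(\M(u(t))/\M_0\bigr)^{1/2}$, so that \eqref{1.def:v} reads $u(x,t)=\lambda(t)^{-1}\,v\bigl(x/\lambda(t),\tau(t)\bigr)$ and, by \eqref{1.def:tau}, $\tau(t)=\tfrac{\M_0}{2p}\ln\lambda(t)$.

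First I would record the bookkeeping identities. The second-moment computation preceding the statement gives $\tfrac{\dd}{\dd t}\M(u(t))=4p\,\I_p(u(t))$, and since $\M(u(t))=\lambda^2\M_0$ this yields $\dot\lambda/\lambda=2p\,\I_p(u)/(\lambda^2\M_0)$, while differentiating $\tau$ gives $\dot\tau=\tfrac{\M_0}{2p}\,\dot\lambda/\lambda$. For the spatial part, with $y=x/\lambda$ one has $\ln_p u=\lambda^{-(p-1)}\ln_p v+c(t)$, where $c(t)=\tfrac{\lambda^{-(p-1)}-1}{p-1}$ (resp. $c(t)=-\ln\lambda$ when $p=1$) is independent of $x$ and hence annihilated by $\pa_x$; therefore $(\ln_p u)_x=\lambda^{-p}(\ln_p v)_y$. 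Since $u\,\dd x=v\,\dd y$, this immediately gives the scaling of the generalized Fisher information, $\I_p(u)=\lambda^{-2p}\,\I_p(v)$, and iterating the derivative, $\bigl(u^p(\ln_p u)_{xx}\bigr)_{xx}=\lambda^{-2p-3}\bigl(v^p(\ln_p v)_{yy}\bigr)_{yy}$. Combining with the $\lambda$-law above, $\dot\tau=\lambda^{-2p-2}\,\I_p(v)$.

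Then I would differentiate $u$ in $t$. Using $\pa_t(x/\lambda)=-y\,\dot\lambda/\lambda$,
\begin{equation*}
  \pa_t u=\lambda^{-1}\dot\tau\,\pa_\tau v-\tfrac{\dot\lambda}{\lambda}\lambda^{-1}\bigl(v+y\,\pa_y v\bigr)=\lambda^{-1}\dot\tau\,\pa_\tau v-\tfrac{\dot\lambda}{\lambda}\lambda^{-1}(yv)_y\,.
\end{equation*}
Plugging this together with the scaling of the fourth-order term into \eqref{2.eq:lnp}, multiplying by $\lambda$, and solving for $\pa_\tau v$ gives
\begin{equation*}
  \pa_\tau v=-\frac{\lambda^{-2p-2}}{\dot\tau}\bigl(v^p(\ln_p v)_{yy}\bigr)_{yy}+\frac{1}{\dot\tau}\,\frac{\dot\lambda}{\lambda}(yv)_y\,.
\end{equation*}
Inserting $\dot\tau=\lambda^{-2p-2}\,\I_p(v)$ into the first coefficient and $\dot\tau=\tfrac{\M_0}{2p}\,\dot\lambda/\lambda$ into the second collapses them to $1/\I_p(v)$ and $2p/\M_0$ respectively, which is exactly \eqref{2.eq:v_p}.

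I do not expect a real obstacle here: the whole argument is elementary once the scaling exponents are tracked carefully. The one point that must not be skipped is that the additive piece $c(t)$ in $\ln_p u$ — a constant only in the space variable — is killed by $\pa_x$, so that no spurious lower-order term survives, and that the same bookkeeping is uniform in $p$, including the logarithmic case $p=1$. As emphasized in the paper, all of this (beginning with the identity $(u^p)_x=p\,u(\ln_p u)_x$ behind the second-moment law) is formal and presupposes enough smoothness and positivity of $u$; it is made rigorous for strong solutions of the thin-film equation in Section~\ref{sec:thin-film}.
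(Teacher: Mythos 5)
Your computation is correct and follows essentially the same route as the paper: a direct chain-rule change of variables, the scaling laws $\I_p(u)=\lambda^{-2p}\I_p(v)$ and $\bigl(u^p(\ln_p u)_{xx}\bigr)_{xx}=\lambda^{-2p-3}\bigl(v^p(\ln_p v)_{yy}\bigr)_{yy}$, and the second-moment evolution $\tfrac{\dd}{\dd t}\M(u(t))=4p\,\I_p(u)$. The only (cosmetic) difference is that the paper determines $\tau$ a posteriori by imposing $\tau'(t)=(\M_t/\M_0)^{-p-1}\I_p(v)$ and then identifies it with \eqref{1.def:tau}, whereas you take \eqref{1.def:tau} as given and verify the coefficients collapse; your explicit remark that the additive constant in $\ln_p u$ is killed by $\pa_x$ is a nice touch that the paper leaves implicit.
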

\begin{proof}
Let us briefly write $\M_t = \M(u(t))$ and $y = x (\M_t/\M_0)^{-1/2}$. Then we calculate
    \begin{align*}
        \pa_t u(x,t) &= -\frac12\left(\frac{\M_t}{\M_0}\right)^{-3/2}\frac{\dd \M_t}{\dd t}\frac{1}{\M_0}v(y,\tau) + \left(\frac{\M_t}{\M_0}\right)^{-1/2}\left(\pa_\tau v(y,\tau) \frac{\dd \tau}{\dd t} + \pa_yv\,\frac{\dd y}{\dd t}\right)\\
        &= -\frac{1}{2\M_0}\left(\frac{\M_t}{\M_0}\right)^{-3/2}\frac{\dd \M_t}{\dd t}v + \left(\frac{\M_t}{\M_0}\right)^{-1/2}\left(\pa_\tau v\, \frac{\dd \tau}{\dd t} - \frac{x}{2\M_0} \left(\frac{\M_t}{\M_0}\right)^{-3/2}\frac{\dd \M_t}{\dd t}\pa_y v\right)\\
        &= -\frac{1}{2\M_0} \left(\frac{\M_t}{\M_0}\right)^{-3/2}\frac{\dd \M_t}{\dd t}\pa_y(y v) + \left(\frac{\M_t}{\M_0}\right)^{-1/2}\pa_\tau v\, \frac{\dd \tau}{\dd t}\,.
    \end{align*}
    Next,
    \begin{align*}
        \I_p(u) = \left(\frac{\M_t}{\M_0}\right)^{-p}\I_p(v)\,,
    \end{align*}
    which implies
    \begin{equation*}
        \frac{\dd \M_t}{\dd t} = 4p \left(\frac{\M_t}{\M_0}\right)^{-p}\I_p(v)\,.
    \end{equation*}
    Finally,
    \begin{align*}
        \left(u^p\left(\ln_{p}u\right)_{xx}\right)_{xx} = \left(\frac{\M_t}{\M_0}\right)^{-3/2 -p}\left(v^p\left(\ln_{p}v\right)_{yy}\right)_{yy}\,.
    \end{align*}
    Putting all together
    \begin{align*}
        \left(\frac{\M_t}{\M_0}\right)^{p + 1}\tau'(t)\pa_\tau v = - \left(v^p\left(\ln_{p}v\right)_{yy}\right)_{yy} + \frac{2p}{\M_0}\I_p(v)(y v)_y\,.
    \end{align*}
    Now choosing the time scale satisfying
    \begin{align*}
        \tau'(t) &= \left(\frac{\M_t}{\M_0}\right)^{-p - 1}\I_p(v)\,,\\
        \tau(0) &= 0\,,
    \end{align*}
    leads to \eqref{1.def:tau} and the fourth-order equation \eqref{2.eq:v_p}.
\end{proof}
\begin{remark}
 Scaling \eqref{1.def:v} preserves the second moment of rescaled solutions, but it can also be directly verified that equation \eqref{2.eq:v_p} also preserves the mass and the first moment.
\end{remark}

\subsection{Steady state}
Here we recall a connection of equations \eqref{2.eq:lnp} with the porous medium equation 
\begin{equation}\label{2.eq:PM}
    \pa_t u = (u^p)_{xx}\,.
\end{equation}
We follow the strategy of Carrillo and Toscani in \cite{CaTo02}. 
Let us add and subtract a second-order term 
\begin{equation*}
    \frac{2p}{\M_0}\frac{c_p}{\I_p(v)}\left(v^p\right)_{yy}
\end{equation*}
to equation \eqref{2.eq:v_p}, where $c_p>0$ will be determined later. Since 
\begin{equation*}
    \frac{2p}{\M_0}\frac{c_p}{\I_p(v)}\left(v^p\right)_{yy} = \frac{c_p}{\I_p(v)}\left(v^p\left(\frac{p}{\M_0}y^2\right)_{yy}\right)_{yy} = \frac{2p^2}{\M_0}\frac{c_p}{\I_p(v)}\left(v\left(\ln_pv\right)_{y}\right)_{y}\,,
\end{equation*} 
equation \eqref{2.eq:v_p} can be rewritten as
\begin{align*}
\pa_\tau v = -\frac{c_p}{\I_p(v)}\left(v^p\left(\frac{1}{c_p}\ln_pv + \frac{p}{\M_0}y^2\right)_{yy}\right)_{yy} + \left(v\left(\frac{2p^2}{\M_0}\frac{c_p}{\I_p(v)}\ln_pv + \frac{p}{\M_0}y^2\right)_y\right)_y\,.    
\end{align*}
Now choosing $c_p>0$ such that
\begin{equation}\label{2:def_cp}
    \frac{1}{c_p} = \frac{2p^2}{\M_0}\frac{c_p}{\I_p(v)}\,,
\end{equation}
asserts that equation \eqref{2.eq:v_p} has the same steady state as the rescaled porous medium equation
\begin{equation}\label{2:eq_PM2}
    \pa_\tau v = \left(v\left(\frac{2p^2}{\M_0}\frac{c_p}{\I_p(v)}\ln_pv + \frac{p}{\M_0}y^2\right)_y\right)_y = D(v^p)_{yy} + \frac{2p}{\M_0}(yv)_y\,.
\end{equation}
Note that \eqref{2:def_cp} has a unique positive solution 
\begin{equation*}
    c_p = \frac{\sqrt{\I_p(v)\M_0}}{\sqrt{2}p},
\end{equation*}
hence, the above diffusion coefficient $D$ amounts to
\begin{equation*}
    D = \frac{\sqrt{2}}{\sqrt{\M_0\I_p(v)}}\,.
\end{equation*}

Next, we take a look for steady states $V(y)$ of equation \eqref{2:eq_PM2}. A sufficient condition for a steady state is that $V(y)$ satisfies
\begin{equation}\label{2:eq_ss1}
    V\left(\frac{\sqrt{2}p}{\sqrt{\M_0\I_p(V)}}\ln_pV + \frac{p}{\M_0}y^2\right)_y = 0\,.
\end{equation}
This equation can be solved implicitly and one obtains
\begin{equation*}
    V(y) = \left(C_p - \frac{\sqrt{\I_p(V)}}{\sqrt{2\M_0}}(p-1)y^2\right)_+^{1/(p-1)},\qquad\text{for }p\ne1\,,
\end{equation*}
where $(\cdot)_+ = \max\{\cdot,0\}$ and $C_p$ is a positive constant which fixes the mass of $V$ to unity. Clearly,
\begin{equation*}
    \left(V^{p-1}\right)_y^2 = \frac{2\I_p(V)}{\M_0}(p-1)^2y^2\,,
\end{equation*}
then multiplying this identity by $V$ and integrating with respect to $y$ yields
\begin{equation*}
    \frac{1}{2}\int_{\R}V\left(\frac{V^{p-1}}{p-1}\right)_y^2\dd y = \frac{\I_p(V)}{\M_0}\int_{\R}y^2 V\,\dd y\,.
\end{equation*}
Since the expression on the left-hand side equals $\I_p(V)$, then
\begin{equation*}
    \int_{\R}y^2 V\,\dd y = \M_0\,,
\end{equation*}
i.e.~the steady state $V(y)$ has the same second moment as $v(y,\tau)$.
However, this steady state is not unique. Namely, given any positive number $\sigma>0$, it is straightforward to check that a function defined by 
\begin{equation*}
    V_\sigma(y) = \frac{1}{\sigma^{3/2}}V\left(\frac{y}{\sqrt{\sigma}}\right)
\end{equation*}
is another steady state, i.e.~solves equation \eqref{2:eq_ss1} and has the same second moment $\M_0$. We now choose $\sigma$ satisfying
\begin{equation}
    \label{2.eq:sigma}
    \sigma = \frac{\sqrt{\M_0}}{p\sqrt{2\I_p(V_\sigma)}}\,,
\end{equation}
which uniquely determines $\sigma$.
Observe that $\I_p(V_\sigma) = \sigma^{-3p+1}\I_p(V)$ and a function $\phi_p(\sigma)$ defined by 
\begin{equation*}
    \phi_p(\sigma) = p\sigma\sqrt{2\I_p(V_\sigma)} = p\sqrt{2\I_p(V)} \sigma^{-3(p-1)/2}
\end{equation*}
is for $p>1$ strictly decreasing, and satisfies  $\lim_{\sigma\downarrow0}\phi_p(\sigma) = +\infty$ and $\lim_{\sigma\to+\infty}\phi_p(\sigma) = 0$. Thus, \eqref{2.eq:sigma} has the unique solution. For this $\sigma$, let $B_p(y)$ be the unique solution of the equation
\begin{equation}\label{2.eq:ss2}
    p\left(\ln_p B_p\right)_y + \frac{y}{\sigma} = 0\,.
\end{equation}
By the choice of $\sigma$, $B_p(y)$ also solves the steady state equation \eqref{2:eq_ss1}, hence, its second moment equals $\M_0$. On the other hand, equation \eqref{2.eq:ss2} has an explicit solution, namely the Barenblatt profile, which is for $p>1$ given by
\begin{equation}\label{2.eq:Bar_p}
    B_p(y) = \left(C_p - \frac{p-1}{p}\frac{y^2}{2\sigma}\right)_+^{1/(p-1)}\,.
\end{equation}
To conclude, with $\sigma$ defined by \eqref{2.eq:sigma} and constant $C_p>0$ controlling the mass, the Barenblatt profile \eqref{2.eq:Bar_p} is the unique steady state of equation \eqref{2.eq:v_p}.
\begin{remark}
    For $p=1$ an implicit solution of equation \eqref{2:eq_ss1} satisfies 
    \begin{equation*}
        V(y) = C\exp\left({-\sqrt{\frac{\I(V)}{2\M_0}}y^2}\right)
    \end{equation*}
    and analogous conclusions hold. The choice of $\sigma$ reduces to $\sigma = \M_0$ and the Barenblatt profile becomes the Gaussian density
    \begin{equation*}
        B_1(y) = C_1\exp(-y^2/(2\sigma))\,.
    \end{equation*}
\end{remark}

\subsection{R\'enyi entropies}
R\'enyi entropy of order $p$ ($p\neq 1$) of a probability density $f$ is defined by \cite{CoTh91,Ren60}
\begin{equation}\label{2.def:ren_p}
    \renyi_p(f) = \frac{1}{1-p}\ln\left(\int_\R f^p\, \dd x\right)\,.
\end{equation}
It is a generalization of the Boltzmann-Shannon entropy \cite{CoTh91, Sha48}
\begin{equation*}
    \entropy(f) = -\int_\R f\ln f\, \dd x\,,
\end{equation*}
which can be formally recovered on the limit as $p\to 1$ in \eqref{2.def:ren_p}.

In the following we collect some results from the literature which will be used later in the analysis. 

\begin{lemma}[Lemma 4.1 in \cite{CaTo14}]
    Let $p>1/3$, $f$ be an arbitrary probability density of a bounded second moment, and let $B_{p,f}$ be the Barenblatt profile of the same second moment as $f$. Then
    \begin{equation}\label{2.ren_ineq}
        \renyi_p(B_{p,f}) - \renyi_p(f) \geq 0\,.
    \end{equation}
\end{lemma}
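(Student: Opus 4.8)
The plan is to prove the inequality $\renyi_p(B_{p,f}) \geq \renyi_p(f)$ by a variational argument: among all probability densities with a fixed second moment, the Barenblatt profile $B_{p,f}$ maximizes the R\'enyi entropy $\renyi_p$ of order $p$ (for $p > 1/3$). Since $\renyi_p(f) = \frac{1}{1-p}\ln\left(\int_\R f^p\,\dd x\right)$ and the logarithm is monotone, for $p > 1$ maximizing $\renyi_p$ is equivalent to \emph{minimizing} $\int_\R f^p\,\dd x$ over densities with mass one and second moment equal to $\M(B_{p,f})$; for $1/3 < p < 1$ the sign of $\frac{1}{1-p}$ flips and one instead maximizes $\int_\R f^p\,\dd x$. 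In either case the optimizer is the same profile, so I would handle the two regimes in parallel.

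First I would set up the constrained optimization. Introduce Lagrange multipliers $\lambda, \mu$ for the mass constraint $\int f = 1$ and the second-moment constraint $\int x^2 f = \M$, and consider the functional $f \mapsto \int_\R \left( \tfrac{1}{p} f^p + \lambda f + \mu x^2 f \right)\dd x$ (absorbing constants as convenient). The pointwise Euler--Lagrange condition, valid on $\{f > 0\}$, reads $f^{p-1} + \lambda + \mu x^2 = 0$, i.e.\ $f(x) = \left( -\lambda - \mu x^2 \right)_+^{1/(p-1)}$, which is exactly the Barenblatt form \eqref{2.eq:Bar_p} once the constants are matched to the prescribed mass and second moment; since those two constraints pin down both parameters uniquely (by the monotonicity argument already used in the excerpt to fix $\sigma$), the critical point is $B_{p,f}$. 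Second, I would upgrade this stationarity to a genuine global inequality by convexity: the map $s \mapsto s^p$ is convex on $[0,\infty)$ for $p > 1$ and concave for $0 < p < 1$, so the linearized (tangent) comparison
\[
    \frac{1}{p} f^p \;\geq\; \frac{1}{p} B^p + B^{p-1}(f - B) \qquad (p>1),
\]
with the reverse inequality for $p < 1$, holds pointwise with $B = B_{p,f}$. Integrating over $\R$ and using $B^{p-1} = -\lambda - \mu x^2$ (an affine function of $x^2$) together with $\int f = \int B = 1$ and $\int x^2 f = \int x^2 B = \M$, all the cross terms cancel, leaving $\int f^p \geq \int B^p$ for $p > 1$ and $\int f^p \leq \int B^p$ for $p < 1$. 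Translating back through the definition of $\renyi_p$ and keeping track of the sign of $\frac{1}{1-p}$ yields $\renyi_p(B_{p,f}) - \renyi_p(f) \geq 0$ in both cases.

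The main technical obstacle is \emph{integrability/justification}, not the algebra. The identity $B^{p-1} = -\lambda - \mu x^2$ only holds on the support of $B$, which is a bounded interval when $p > 1$ (the profile is compactly supported) and all of $\R$ when $p < 1$; outside $\supp B$ the tangent-line comparison must be checked directly (there $B = 0$, $B^p = 0$, and for $p > 1$ one needs $\tfrac1p f^p \geq 0$, which is trivial, while for $p < 1$ one must argue that the concavity inequality still degrades correctly — here the hypothesis $p > 1/3$ enters to guarantee $\int x^2 B^p < \infty$ and that all integrals appearing are finite). One must also ensure $\int f^p\,\dd x < \infty$ for the statement to be non-vacuous; if it is infinite the inequality is immediate for $p < 1$ and one may restrict attention to finite-entropy $f$ for $p > 1$. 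I would also remark that the limiting case $p = 1$ (Gaussian $B_1$, Shannon entropy) follows by the same tangent-line trick applied to the convex function $s\mapsto s\ln s$, recovering the classical fact that the Gaussian maximizes Shannon entropy at fixed variance — consistent with the Remark preceding the lemma. Since this is quoted as Lemma 4.1 of \cite{CaTo14}, I would keep the write-up brief and refer there for the delicate integrability bookkeeping.
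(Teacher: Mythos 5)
The paper itself gives no proof of this lemma --- it is imported verbatim as Lemma 4.1 of \cite{CaTo14} --- and your tangent-line/Lagrange-multiplier argument is precisely the standard proof behind that citation (the Barenblatt maximizes $\renyi_p$ at fixed second moment because $s\mapsto s^p$ is convex for $p>1$ and concave for $p<1$, and $B_{p,f}^{p-1}$ is affine in $x^2$ on its support), so in substance your proposal is correct and matches the cited source. Two small bookkeeping points: for $p>1$ the cross terms do not cancel exactly when $f$ charges the complement of $\supp B_{p,f}$; rather $B_{p,f}^{p-1}=(-\lambda-\mu x^2)_+\geq -\lambda-\mu x^2$ with $f\geq 0$ makes the error term have the favorable sign, while for $1/3<p<1$ the profile has full support, so the ``outside the support'' case you worry about there is vacuous. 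Also, your side remark on $\int_\R f^p\,\dd x=\infty$ has the regimes swapped: for $p>1$ that case gives $\renyi_p(f)=-\infty$ and the inequality is trivial, whereas for $1/3<p<1$ it cannot occur under the hypotheses (finite mass and second moment force $\int_\R f^p\,\dd x<\infty$ by H\"older, and in any case the concavity comparison itself bounds it by $\int_\R B_{p,f}^p\,\dd x$).
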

\noindent Inequality \eqref{2.ren_ineq} allows one to define a nonnegative quantity 
\begin{equation*}
    \renyi_p(f|B_{p,f}) = \renyi_p(B_{p,f}) - \renyi_p(f)\,,
\end{equation*}
called {\em the relative R\'enyi entropy} \cite{CaTo14}. 

Another important notion in information theory is the entropy power \cite{Sha48}. For R\'enyi entropies, the corresponding entropy power has been defined by \cite{SaTo14}
\begin{equation*}
    \renep_p(f) = \exp\left({(p+1)\renyi_p(f)}\right)\,.
\end{equation*} 
In connection with the porous medium equation \eqref{2.eq:PM}, the entropy power has a remarkable concavity property \cite{SaTo14}, i.e.
\begin{equation*}
    \frac{\dd^2}{\dd t^2}\renep_p(u(t)) \leq 0\,
\end{equation*}
along solutions to \eqref{2.eq:PM} for $p>-1$. The concavity property is essentially a consequence of the following functional inequality.
\begin{lemma}\label{2.lem:villp}Let $p>1/3$. 
For sufficiently smooth, positive and rapidly decaying probability densities $f$, it holds that
    \begin{equation}\label{2.ineq:KIp}
        \int_{\R}f^p\left(\ln_p f\right)_{xx}^2 \dd x\geq 4p^2\frac{\I_p^2(f)}{\int_\R f^p\, \dd x}\,.
    \end{equation}
\end{lemma}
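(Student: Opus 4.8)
The plan is to obtain \eqref{2.ineq:KIp} from a single integration by parts followed by one application of the Cauchy--Schwarz inequality, using the algebraic identity $(f^p)_x = p\,f(\ln_p f)_x$ already employed above. First I would record the elementary differentiation rule $(\ln_p f)_x = f^{p-2}f_x$, valid for every $p$ (for $p=1$ it reads $(\ln f)_x = f_x/f$), so that indeed $(f^p)_x = p f^{p-1}f_x = p\,f(\ln_p f)_x$; and I would note that, by the very definition of the generalized Fisher information through $\frac{\dd}{\dd t}\M(u) = 2p\int_\R u(\ln_p u)_x^2\,\dd x = 4p\,\I_p(u)$, one has $\I_p(f) = \tfrac12\int_\R f(\ln_p f)_x^2\,\dd x$.

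The core step is then the computation of the quantity $\int_\R f^p(\ln_p f)_{xx}\,\dd x$: integrating by parts once and substituting the identity above,
\begin{equation*}
    \int_\R f^p\,(\ln_p f)_{xx}\,\dd x = -\int_\R (f^p)_x\,(\ln_p f)_x\,\dd x = -p\int_\R f\,(\ln_p f)_x^2\,\dd x = -2p\,\I_p(f)\,,
\end{equation*}
where the boundary term $\big[f^p(\ln_p f)_x\big]_{-\infty}^{+\infty} = \big[f^{2p-2}f_x\big]_{-\infty}^{+\infty}$ is discarded by the smoothness and rapid decay of $f$. Writing $f^p(\ln_p f)_{xx} = f^{p/2}\cdot\big(f^{p/2}(\ln_p f)_{xx}\big)$ and applying Cauchy--Schwarz yields
\begin{equation*}
    4p^2\,\I_p^2(f) = \Big(\int_\R f^p\,(\ln_p f)_{xx}\,\dd x\Big)^{2} \le \Big(\int_\R f^p\,\dd x\Big)\Big(\int_\R f^p\,(\ln_p f)_{xx}^2\,\dd x\Big)\,,
\end{equation*}
which is exactly \eqref{2.ineq:KIp} after dividing through by $\int_\R f^p\,\dd x$.

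The argument is essentially a one-liner, so the only point deserving real attention is the justification of the integration by parts, i.e.\ verifying that the positivity and decay assumptions on $f$ make the boundary contribution vanish. This is immediate for $p\ge 1$ — in particular for the range $1\le p\le 3/2$ relevant to this paper — since then $f^{2p-2}$ is bounded; for $1/3<p<1$ the weight $f^{2p-2}$ blows up where $f$ is small, and one would have to argue more carefully (by truncation/approximation, or by simply invoking \cite{SaTo14}, where this estimate is the analytic heart of the concavity of the R\'enyi entropy power). I would also point out, since it bears on the sharpness of the eventual rates, that the Cauchy--Schwarz step is saturated precisely when $(\ln_p f)_{xx}$ is constant, which by \eqref{2.eq:ss2} is the case for the Barenblatt steady states $B_p$; hence \eqref{2.ineq:KIp} is sharp.
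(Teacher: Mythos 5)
Your proof is correct and is essentially the paper's own argument in a different guise: the paper's ``Villani trick'' of expanding $\int_\R f^p\left((\ln_p f)_{xx}+\lambda\right)^2\dd x\ge 0$ and optimizing in $\lambda$ is exactly the Cauchy--Schwarz inequality you apply, and both hinge on the same integration by parts $\int_\R f^p(\ln_p f)_{xx}\,\dd x=-2p\,\I_p(f)$ with the same boundary-term caveat for $1/3<p<1$. So the two proofs coincide in substance, and your remark on sharpness at the Barenblatt profiles is a correct bonus observation.
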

\begin{proof} The proof of a more general result can be found in the proof of \cite[Theorem 1]{SaTo14}. Here we demonstrate \eqref{2.ineq:KIp} by using the Villani's trick from \cite{Vil00}. For every $\lambda\in\R$ it holds
    \begin{equation*}
        \int_\R f^p\left(\left(\ln_p f\right)_{xx} + \lambda\right)^2\dd x \geq0\,.
    \end{equation*}
    Expanding the left hand side and integrating by parts it follows
    \begin{align*}
         \int_{\R}f^p\left(\ln_p f\right)_{xx}^2 - 2\lambda p\I_p(f) + \lambda^2 \int_\R f^p\, \dd x \geq 0\,.
    \end{align*}
    Choosing 
    \begin{equation*}
        \lambda = \frac{2p}{\int_\R f^p\, \dd x}\I_p(f)
    \end{equation*}
    leads to \eqref{2.ineq:KIp}.
\end{proof}
Let us introduce the modified Fisher information as defined in \cite{CaTo14}
\begin{equation}\label{2.def:modF}
    \J_p(f) :=\frac{1}{\int_{\R}f^p\,\dd x}\int_{\{f>0\}}\frac{(f^p)_x^2}{f}\dd x = \frac{2p^2}{\int_\R f^p\, \dd x}\I_p(f)\,.
\end{equation}
The concavity of the R\'enyi entropy power implies the following inequality \cite{CaTo14,SaTo14}.
\begin{lemma}\label{2.lem:epip}
    Let $p>1/3$. Then for any probability density $f$ it holds
    \begin{equation}\label{2.ineq:epi_p}
        \exp\left((p+1)\renyi_{p}(f|B_{\sigma,f})\right) \leq \frac{\J_p(f)}{\J_p(B_{\sigma,f})}\,.
    \end{equation}
\end{lemma}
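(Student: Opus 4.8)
The plan is to realise both sides of \eqref{2.ineq:epi_p} through the R\'enyi entropy power evolved along the porous medium flow, exploiting its concavity as an interpolation device. Fix a probability density $f$ and let $u(t)$ solve the porous medium equation \eqref{2.eq:PM} with $u(0)=f$. A one-line integration by parts, using $(u^p)_x=p\,u(\ln_p u)_x$, yields the entropy production identity
\begin{equation*}
    \frac{\dd}{\dd t}\renyi_p(u(t)) = \J_p(u(t))\,,
\end{equation*}
with $\J_p$ as in \eqref{2.def:modF}, and hence, differentiating $\renep_p=\exp((p+1)\renyi_p)$,
\begin{equation*}
    \frac{\dd}{\dd t}\renep_p(u(t)) = (p+1)\,\renep_p(u(t))\,\J_p(u(t))\,.
\end{equation*}
By the result of Savar\'e and Toscani \cite{SaTo14}, a key ingredient of which is the Villani-type inequality of Lemma \ref{2.lem:villp}, the map $t\mapsto\renep_p(u(t))$ is concave on $(0,\infty)$ for $p>1/3$ (at $p=1$ this is the classical concavity of the Shannon entropy power along the heat flow).

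Next I would identify the asymptotic slope of this concave function. Let $\tilde u(t)$ solve \eqref{2.eq:PM} with $\tilde u(0)=B_{\sigma,f}$, the Barenblatt profile \eqref{2.eq:Bar_p} carrying the same second moment as $f$. Because a Barenblatt profile evolves self-similarly, the scaling computation $\int\tilde u(t)^p\,\dd y = c\,(t+t_0)^{(1-p)/(p+1)}$ shows that $t\mapsto\renep_p(\tilde u(t))$ is an \emph{affine} function, whose constant slope is therefore $(p+1)\,\renep_p(B_{\sigma,f})\,\J_p(B_{\sigma,f})$. On the other hand, $u(t)$ and $\tilde u(t)$ are probability densities of the same unit mass, so by the classical Barenblatt asymptotics for \eqref{2.eq:PM} the ratios $\renep_p(u(t))/t$ and $\renep_p(\tilde u(t))/t$ both tend, as $t\to\infty$, to the same limit $a$ (the leading coefficient depending on the mass alone). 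Since the derivative of a concave $C^1$ function is nonincreasing and converges to its asymptotic slope, $a$ is a lower bound for its derivative at $t=0$, so
\begin{equation*}
    (p+1)\,\renep_p(f)\,\J_p(f) = \frac{\dd}{\dd t}\Big|_{t=0}\renep_p(u(t)) \;\geq\; a = (p+1)\,\renep_p(B_{\sigma,f})\,\J_p(B_{\sigma,f})\,.
\end{equation*}
Dividing, taking logarithms, and using $\renyi_p(f)-\renyi_p(B_{\sigma,f})=-\renyi_p(f|B_{\sigma,f})$ together with $\renep_p=\exp((p+1)\renyi_p)$, this rearranges precisely to \eqref{2.ineq:epi_p}.

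The genuinely delicate point — which I would flag as the main obstacle — is the matching of the two asymptotic slopes. It rests on the known $L^1$-convergence of solutions of \eqref{2.eq:PM} to the Barenblatt attractor after self-similar rescaling, upgraded to convergence of $\int u(t)^p\,\dd y$ (equivalently of $\renep_p(u(t))/t$); for $1/3<p$ this needs the uniform integrability of the $p$-th powers, which is where one invokes the regularising and decay estimates for the porous medium flow, or else carries the whole argument out on a smooth approximation and passes to the limit. Since all computations in the present paper are carried out at a formal level and this statement is quoted from \cite{CaTo14,SaTo14}, recording the formal argument above suffices.
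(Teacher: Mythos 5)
Your argument is correct and is essentially the route the paper points to: the lemma is quoted from \cite{CaTo14,SaTo14}, where it is obtained exactly as you do, by combining the identity $\frac{\dd}{\dd t}\renep_p(u(t))=(p+1)\renep_p(u(t))\J_p(u(t))$ along the porous medium flow with the concavity of the R\'enyi entropy power and the identification of the asymptotic (Barenblatt) slope, using that $\renep_p(f)\J_p(f)$ is dilation invariant. The delicate point you flag --- upgrading the $L^1$ Barenblatt asymptotics to convergence of $\renep_p(u(t))/t$ --- is precisely the step handled in those references, so your formal account matches the intended proof.
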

\begin{remark}
    As pointed out in \cite{CaTo14, SaTo14} inequality \eqref{2.ineq:epi_p} is a sub-family of the Gagliardo-Nirenberg-Sobolev inequalities.
    Specifically, the relative R\'enyi entropy satisfies
    \begin{equation}\label{2.eq:expreny}
        \exp\left(\renyi_p(f|B_{\sigma,f})\right) = \frac{\left(\int_{\R}B_{\sigma,f}^p\,\dd x\right)^{1/(1-p)}}{\left(\int_{\R}f^p\,\dd x\right)^{1/(1-p)}}\,.
    \end{equation}
    Thus, \eqref{2.ineq:epi_p} can be transformed for $p>1$ into
    \begin{equation*}
        \left(\int_{\R}f^p\,\dd x\right)^{2p/(p-1)}\leq \frac{\left(\int_{\R}B_{\sigma,f}^p\,\dd x\right)^{2p/(p-1)}}{\I_p(B_{\sigma,f}^p)}\I_p(f)\,.
    \end{equation*}
    Taking $g = f^{p-1/2}$ and using that $f=g^{2/(2p-1)}$ is a probability density, a straightforward calculation yields
    \begin{equation*}
        \|g\|_{L^{2p/(2p-1)}(\R)}\leq C_p\|g_x\|_{L^2(\R)}^{(2p^2-3p+1)/2p^2}\|g\|_{L^{2/(2p-1)}(\R)}^{(3p-1)/2p^2}\,,
    \end{equation*}
    which is a Gagliardo-Nirenberg inequality with explicitly given constant depending on the corresponding Barenblatt profile
    \begin{equation*}
        C_p = \left(\frac{2\left(\int_{\R}B_{\sigma,f}^p\,\dd x\right)^{2p/(p-1)}}{(2p-1)^2\I_p(B_{\sigma,f}^p)}\right)^{(2p^2-3p+1)/4p^2}.
    \end{equation*}
\end{remark}
Finally, we recall a connection of the relative R\'enyi entropy with a standard relative entropy functional, also called Newman-Ralston relative entropy \cite{New84}, which is defined by
\begin{equation*}
    \entropy_p(f|B_{\sigma,f}) = \frac{1}{p-1}\int_{\R}\left(f^p - B_{\sigma,f}^p - pB_{\sigma,f}^{p-1}(f - B_{\sigma,f})\right)\dd x\,.
\end{equation*}
\begin{lemma}\label{lem:NR-ren}
    Let $p>1$, then the relative Newman-Ralston entropy is bounded from above by the relative R\'enyi entropy in the following way
    \begin{equation*}
        \entropy_p(f|B_{\sigma,f}) \leq \left(\int_{\R}f^p\,\dd x\right)\renyi_p(f|B_{\sigma,f})\,.
    \end{equation*}
\end{lemma}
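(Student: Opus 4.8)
The plan is to convert the claimed comparison into a single scalar inequality for $\int_\R B_{\sigma,f}^{p-1}f\,\dd x$ and then to exploit the explicit quadratic structure of the Barenblatt profile \eqref{2.eq:Bar_p} together with the fact that $f$ and $B_{\sigma,f}$ have the same mass and the same second moment $\M_0$.

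First I would expand the relative Newman-Ralston entropy. Writing $B=B_{\sigma,f}$ and using $\int_\R f\,\dd x=\int_\R B\,\dd x=1$ one obtains
\begin{equation*}
  \entropy_p(f|B)=\frac{1}{p-1}\left(\int_\R f^p\,\dd x+(p-1)\int_\R B^p\,\dd x-p\int_\R B^{p-1}f\,\dd x\right),
\end{equation*}
where on $\{B=0\}$ the integrand reduces to $\tfrac{1}{p-1}f^p$ and $B^{p-1}$ is read as $0$. On the other hand, from the definition \eqref{2.def:ren_p} (equivalently \eqref{2.eq:expreny}) and $p>1$,
\begin{equation*}
  \renyi_p(f|B)=\frac{1}{p-1}\ln\frac{\int_\R f^p\,\dd x}{\int_\R B^p\,\dd x}.
\end{equation*}
Applying the elementary inequality $a\ln(a/b)\ge a-b$, valid for all $a,b>0$ because $a\mapsto a\ln(a/b)-a+b$ has derivative $\ln(a/b)$ and hence minimum $0$ at $a=b$, with $a=\int_\R f^p\,\dd x$ and $b=\int_\R B^p\,\dd x$, gives
\begin{equation*}
  \left(\int_\R f^p\,\dd x\right)\renyi_p(f|B)\ge\frac{1}{p-1}\left(\int_\R f^p\,\dd x-\int_\R B^p\,\dd x\right).
\end{equation*}
Comparing the two displays, the assertion follows once one checks $\int_\R B^{p-1}f\,\dd x\ge\int_\R B^p\,\dd x$.

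To prove this last inequality I would use the explicit form \eqref{2.eq:Bar_p}: $B^{p-1}(x)=\bigl(C_p-\tfrac{p-1}{p}\tfrac{x^2}{2\sigma}\bigr)_+\ge C_p-\tfrac{p-1}{p}\tfrac{x^2}{2\sigma}$ pointwise in $x$, with equality on $\supp B$. Integrating this affine lower bound against $f$ and using $\int_\R f\,\dd x=1$, $\int_\R x^2 f\,\dd x=\M_0$ yields $\int_\R B^{p-1}f\,\dd x\ge C_p-\tfrac{p-1}{2p\sigma}\M_0$; integrating it against $B$ (which vanishes off $\supp B$, where the bound agrees with $B^{p-1}$) and using that $B$ also has mass $1$ and second moment $\M_0$ — both established above for $B=B_{\sigma,f}$ — gives $\int_\R B^p\,\dd x=C_p-\tfrac{p-1}{2p\sigma}\M_0$. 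The two quantities coincide, so the required inequality holds, in fact with equality at this step; the only genuine loss in the lemma is the logarithmic inequality above. The argument is short and I expect no real obstacle: one only needs to keep track of the supports in the expansion of $\entropy_p(f|B)$ and to invoke the standing assumption that $f$ has finite second moment so that every integral above converges, while the hypothesis $p>1$ is used precisely through the divisions by $p-1$, the convexity making $\entropy_p(f|B)\ge0$, and the power $\tfrac{1}{p-1}$ in the Barenblatt profile.
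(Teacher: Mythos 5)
Your proof is correct, and it reconstructs essentially the same argument as the one the paper relies on (it cites Lemma 4.2 of Carrillo--Toscani \cite{CaTo14} rather than reproving it): reduce to $\int_\R B_{\sigma,f}^{p-1}f\,\dd x \geq \int_\R B_{\sigma,f}^{p}\,\dd x$ via the affine minorant of $B_{\sigma,f}^{p-1}$ and the matching mass and second moment, then conclude with the elementary inequality $a\ln(a/b)\geq a-b$. No gaps; the bookkeeping on $\{B_{\sigma,f}=0\}$ and the use of $p>1$ are exactly as needed.
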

\noindent This result and its proof can be found in \cite[Lemma 4.2]{CaTo14}.

\begin{remark}
    Following the previous result, all decay estimates obtained for the relative R\'enyi entropy will accordingly apply to the the relative Newman-Ralston entropy and subsequently to the $L^1$-distance by means of the Csiszar-Kullback inequality \cite{CJMTU01}.
\end{remark}

\section{Improved equilibration rates to self-similarity} \label{sec:3}

\subsection{The family of fourth-order equations for $1< p \leq 3/2$}\label{sec:3.p}

In this section we formally calculate the evolution of the relative R\'enyi entropy along smooth non-negative spatially fast decaying solutions to equation \eqref{2.eq:v_p} and bound it by the relative R\'enyi entropy itself. Here we assume $1 < p \leq 3/2$. The case $p=1$ (the DLSS equation) will be treated separately in Section \ref{sec:dlss}, and formal calculations for the thin-film equation ($p=3/2$) will be justified for strong solutions in Section \ref{sec:thin-film}.

The evolution of the R\'enyi entropy along solutions to \eqref{2.eq:lnp} is calculated as follows:
\begin{align*}
    \frac{\dd}{\dd t}\renyi_p(u(t)) &= \frac{p}{1-p}\frac{1}{\int_\R u^p\, \dd x}\int_\R u^{p-1}\pa_t u\, \dd x
    = -\frac{p}{1-p}\frac{1}{\int_\R u^p\, \dd x}\int_\R u^{p-1}\left(u^p\left(\ln_{p}u\right)_{xx}\right)_{xx}\, \dd x\\
    &= \frac{p}{\int_\R u^p\, \dd x}\int_\R u^p\left(\frac{u^{p-1}-1}{p-1}\right)_{xx}^2\dd x =: \frac{p}{\int_\R u^p\, \dd x}\K_p(u)\,.
\end{align*}
This defines the second-order functional $\K_p(u)$.
Let $B_p$ denotes the Barenblatt profile \eqref{2.eq:Bar_p} with $\sigma$ satisfying \eqref{2.eq:sigma}. Then the evolution of the relative R\'enyi entropy along solutions to \eqref{2.eq:v_p} is given by
\begin{align*}
    \frac{\dd}{\dd \tau}\renyi_p(v(\tau)|B_p) &= - \frac{\dd}{\dd \tau}\renyi_p(v(\tau)) = -\frac{p}{1-p}\frac{1}{\int_\R v^p \dd y} \int_\R v^{p-1}\pa_\tau v\, \dd y\\
    &=  \frac{p}{1-p}\frac{1}{\int_\R v^p \dd y} \frac{1}{\I_p(v)} \int_\R v^{p-1}\left(v^p\left(\ln_{p}v\right)_{yy}\right)_{yy}\, \dd y \\
    &\qquad - \frac{p}{1-p}\frac{1}{\int_\R v^p \dd y} \frac{2p}{\M_0} \int_\R v^{p-1}(y v)_y\dd y\\
    &= -\frac{1}{\int_\R v^p \dd y} \frac{p}{\I_p(v)} \int_\R v^p\left(\ln_{p}v\right)_{yy}^2\, \dd y + \frac{2p}{\M_0}\,.
\end{align*}
In calculating the integral of the convective term we used integration by parts and obtained
\begin{align*}
    - \frac{p}{1-p}\frac{1}{\int_\R v^p \dd y} \frac{2p}{\M_0} \int_\R v^{p-1}(y v)_y\dd y &=
    \frac{p}{1-p}\frac{1}{\int_\R v^p \dd y} \frac{2p}{\M_0}(p-1) \int_\R y v^{p-1}v_y \,\dd y\\
    &= - \frac{1}{\int_\R v^p \dd y} \frac{2p}{\M_0} \int_\R y (v^{p})_y \,\dd y = \frac{2p}{\M_0}\,.
\end{align*}
Therefore, we have
\begin{align}\label{3.eq:rendiss2}
    \frac{\dd}{\dd \tau}\renyi_p(v(\tau)|B_p) &=  -\frac{1}{\int_\R v^p\, \dd y} \frac{p}{\I_p(v)} \K_p(v) + \frac{2p}{\M_0}\,.
\end{align}
Multiplying equation \eqref{2.eq:ss2} by $yB_p$, integrating over space and integrating by parts gives
\begin{equation*}
    \int_{\R}B_p^p\,\dd y = \frac{1}{\sigma}\int_{\R}y^2B_p\,\dd y = \frac{\M_0}{\sigma}\,.
\end{equation*}
Then combining the definition of $\sigma$ from \eqref{2.eq:sigma} with the latter identity yields
\begin{equation}\label{3.eq:2p}
    \frac{2p}{\M_0} = \frac{4p^3 \I_p(B_p)}{\left(\int_{\R}B_p^p\, \dd y\right)^2}\,.
\end{equation}
Using the inequality \eqref{2.ineq:KIp} from Lemma \ref{2.lem:villp} and the previous \eqref{3.eq:2p} in \eqref{3.eq:rendiss2} leads to
\begin{align}\nonumber
    \frac{\dd}{\dd \tau}\renyi_p(v(\tau)|B_p) &\leq -\frac{4p^3\I_p(v)}{\left(\int_{\R}v^p\, \dd y\right)^2} + \frac{4p^3\I_p(B_p)}{\left(\int_{\R}B_p^p\, \dd y\right)^2} = -2p\left(\frac{\J_p(v)}{\int_{\R}v^p\, \dd y} - \frac{\J_p(B_p)}{\int_{\R}B_p^p\, \dd y}\right)\\
    &= -2p\frac{\J_p(B_p)}{\int_{\R}B_p^p\, \dd y}\left(\frac{\J_p(v)}{\J_p(B_p)}\frac{\int_{\R}B_p^p\, \dd y}{\int_{\R}v^p\, \dd y} - 1\right) = -\frac{2p}{\M_0}\left(\frac{\J_p(v)}{\J_p(B_p)}\frac{\int_{\R}B_p^p\, \dd y}{\int_{\R}v^p\, \dd y} - 1\right).\label{3.ineq:rendiss3}
\end{align}
In the above, we used the modified Fisher information defined in \eqref{2.def:modF}. Using the entropy power inequality \eqref{2.ineq:epi_p} (Lemma \ref{2.lem:epip}) and identity \eqref{2.eq:expreny} it follows 
\begin{equation*}
    \frac{\J_p(v)}{\J_p(B_p)}\frac{\int_{\R}B_p^p\, \dd y}{\int_{\R}v^p\, \dd y}\geq \exp\left((p+1)\renyi_p(v|B_p) - (p-1)\renyi_p(v|B_p)\right) = \exp(2\renyi_p(v|B_p))\,.
\end{equation*} 
Therefore, going back to \eqref{3.ineq:rendiss3}, the dissipation of the relative R\'enyi entropy can be bounded with the relative R\'enyi entropy itself according to
\begin{align}\label{3.ineq:rendiss4}
    \frac{\dd}{\dd \tau}\renyi_{p}(v(\tau)|B_p) &\leq -\frac{2p}{\M_0}\left(\exp(2\renyi_p(v|B_p)) - 1\right)\,.
\end{align}
This differential inequality can be solved explicitly by separation of variables (cf.~\cite[Section 3]{CaTo14}) and one obtains the following decay estimate
\begin{equation*}
    \renyi_{p}(v(\tau)|B_p) \leq -\frac{1}{2}\ln\left(1 - \left(1 - \exp({-2\renyi_p(v_0|B_p)})\right)\exp\left(-\frac{4p}{\M_0}\tau\right)\right)\,,\quad \tau > 0\,.
\end{equation*}
\begin{figure}
    \includegraphics[width=6.5cm]{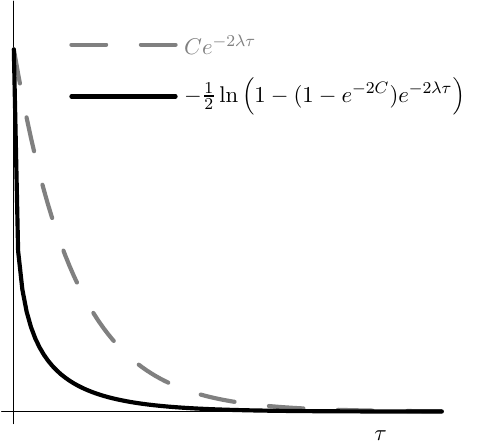}
    \caption{Comparison of exponential and super-exponential decay of re\-la\-tive entropies.}
    \label{fig:decay}
\end{figure}
Employing Lemma \ref{lem:NR-ren}, the relative Newmann-Ralston entropy has the same decay estimate 
\begin{align}\label{3.ineq:NR_ren}
    \entropy_p(v(\tau)|B_p) \leq C\left(\int_{\R}v^p\,\dd y\right)\renyi_p(v|B_p)\leq C\left(\int_{\R}v_0^p\,\dd y\right)\renyi_p(v|B_p)\,.
\end{align}
The last inequality follows from the fact that $\int_{\R}v^p\,\dd y$ is a Lyapunov functional for equation \eqref{2.eq:v_p} \cite{MMS09}.
Since the relative R\'enyi entropy is dilation invariant, going back to the original variables \eqref{1.def:v} and \eqref{1.def:tau} we obtain
    \begin{align*}
        \renyi_p(u(t)|U_{p}(t)) \leq  -\frac{1}{2}\ln\left(1 - \left(1 - \exp({-2\renyi_p(u_0|B_p)})\right)\frac{\M_0}{\M(u(t))}\right)\,,\quad t > 0\,,
    \end{align*}
    where $U_{p}(t)$ is the rescaled Barenblatt profile having the same second moment as $u(t)$, i.e.
    \begin{equation}\label{3.def:Up}
        U_p(x,t) = \left(\frac{\M_t}{\M_0}\right)^{-1/2}\left(C_p - \frac{p-1}{p}\frac{\M_0}{\M_t}\frac{x^2}{2\sigma_t}\right)_+^{1/(p-1)}\,,
    \end{equation}
where $\M_t = \M(u(t))$ and $\sigma_t>0$ matches the second moment being equal to $\M_t$.
Finally, utilizing the Csiszar-Kullback inequality we conclude with stating the following theorem.
\begin{theorem}\label{3.tm}
    Let $1<p\leq 3/2$ and let $u$ be a smooth non-negative solution of the Cauchy problem \eqref{1.eq:lnp_sym} with initial data $u_0$ satisfiying
    \begin{equation*}
        \int_\R u_0\,\dd x = 1\,,\quad \int_\R  xu_0\,\dd x = 0\quad \text{and}\quad \int_\R x^2u_0\,\dd x = \M_0\,,
    \end{equation*}
    and $\renyi_p(u_0) < +\infty$. Then, there exists a constant $C>0$ independent of $u$, such that
    \begin{equation*}
        \|u(t) - U_p(t)\|_{L^1(\R)} \leq C\left(\renyi_p(u(t)|U_{p}(t))\right)^{1/2}\,\quad \text{for all }\ t>0\,,
    \end{equation*}
    where $U_p$ is defined in \eqref{3.def:Up}.
\end{theorem}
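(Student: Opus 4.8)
The plan is to read the $L^1$-estimate off as the final link of the chain assembled in Section~\ref{sec:3}: convert the relative R\'enyi entropy first into the relative Newman--Ralston entropy, and then into an $L^1$-distance by a Csisz\'ar--Kullback inequality. The theorem is therefore essentially a corollary of Lemma~\ref{lem:NR-ren} together with \cite{CJMTU01}.

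First I would reduce to the rescaled variables. The change of variables \eqref{1.def:v} is a mass-preserving dilation, and applying the same dilation to $U_p(t)$ turns it into the Barenblatt steady state $B_p$ of \eqref{2.eq:Bar_p}; hence $\|u(t)-U_p(t)\|_{L^1(\R)}=\|v(\tau)-B_p\|_{L^1(\R)}$, where $v$ solves \eqref{2.eq:v_p}. Since the relative R\'enyi entropy is dilation invariant one also has $\renyi_p(u(t)|U_p(t))=\renyi_p(v(\tau)|B_p)$, and by construction $B_p$ shares the second moment $\M_0$ with $v(\tau)$. It therefore suffices to prove $\|v(\tau)-B_p\|_{L^1(\R)}^2\le C\,\renyi_p(v(\tau)|B_p)$ with $C$ independent of the solution.

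Next I would pass from R\'enyi to Newman--Ralston to $L^1$. Lemma~\ref{lem:NR-ren} gives $\entropy_p(v(\tau)|B_p)\le\big(\int_\R v(\tau)^p\,\dd y\big)\,\renyi_p(v(\tau)|B_p)$; since $\int_\R v^p\,\dd y$ is a Lyapunov functional for \eqref{2.eq:v_p} \cite{MMS09} it is nonincreasing in $\tau$ and bounded by $\int_\R v_0^p\,\dd y=\int_\R u_0^p\,\dd x<\infty$, finite thanks to the integrability assumption $\renyi_p(u_0)<+\infty$. I would then invoke the generalized Csisz\'ar--Kullback inequality \cite{CJMTU01} for $p$-entropies (in particular for $1<p\le 3/2$): for probability densities $f,g$ on $\R$, $\|f-g\|_{L^1(\R)}^2\le C_p\,\entropy_p(f|g)$. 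Applying it with $f=v(\tau)$ and $g=B_p$, chaining with the previous bound, taking square roots, and undoing the rescaling of the first step finishes the proof.

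The step I expect to be the main obstacle is ensuring the final constant is genuinely independent of $u$, i.e.~that the Csisz\'ar--Kullback constant $C_p$ does not degenerate. Pairing the pointwise convexity bound $\entropy_p(f|g)\gtrsim\int_\R(f-g)^2\max\{f,g\}^{p-2}\,\dd y$ with Cauchy--Schwarz reduces this to bounding $\int_\R\max\{v(\tau),B_p\}^{2-p}\,\dd y$; on the unbounded line this integral is finite, with a bound depending only on the shared unit mass and second moment $\M_0$, precisely because $2(2-p)/(p-1)>1$ for $p<5/3$ and $3/2<5/3$. Since $v(\tau)$ and $B_p$ both have unit mass and second moment $\M_0$, the constant is universal, which yields the uniformity claimed in the statement.
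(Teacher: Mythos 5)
Your proof is correct and follows essentially the same route as the paper: reduce by dilation invariance to the rescaled pair $(v(\tau),B_p)$, pass from the relative R\'enyi entropy to the Newman--Ralston entropy via Lemma \ref{lem:NR-ren} together with the Lyapunov property of $\int_\R v^p\,\dd y$, and conclude with the generalized Csisz\'ar--Kullback inequality of \cite{CJMTU01}. Your additional sketch that the Csisz\'ar--Kullback constant is uniform (using the shared mass and second moment and the restriction $p<5/3$) merely fills in a detail the paper delegates to the citation.
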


\begin{remark}
The obtained decay estimate of the relative Newmann-Ralston entropy in \eqref{3.ineq:NR_ren} is super-exponential and thus formally improves the exponential decay obtained for weak solutions in \cite{MMS09} (cf.~Figure \ref{fig:decay}). However, these improvements are only for early and intermediate times $\tau$, while for large times $\tau$, decay estimate \eqref{3.ineq:NR_ren} behaves exponentially. This can be seen from the right-hand side in \eqref{3.ineq:rendiss4}. Namely, for large times $\renyi_p(v(\tau)|B_p) = z$ will be small, and for small values $z$ we have $\exp(2z)-1\approx 2z$, which yields the exponential decay.
\end{remark}
\begin{remark}
    While it is possible to specify minimal regularity conditions under which the above formal calculations would be justified, the existence of corresponding solutions for $1\leq p < 3/2$ remains open in the literature. For clarity of exposition, we omit this issue here; it will be the focus of future work.
\end{remark}

\subsection{The thin-film equation} \label{sec:thin-film}

In this subsection we will prove that the dissipation of the relative R\'enyi entropy of order $3/2$ can be made rigorous along strong solutions of the rescaled thin-film equation. Prior to that we recall some basic analytical facts about the thin-film equation.

\subsubsection{Strong solutions: definition, basic properties and long-time behavior}\label{sec:CaTo}
We briefly review some basic properties of strong solutions to the thin-film equation \eqref{1.eq:tfe}, emphasizing the global existence \cite{BBD95, Ber96, BePu96} and the long-time behavior \cite{Ber96, CaTo02}. 

To start with, let us introduce the standard notation in the theory of thin-film equations. Let $T>0$ denote a given time horizon, $Q_T = \R\times(0,T)$ and $Q = \R\times(0,+\infty)$ are space-time cylinders, $P_T = Q_T\setminus \left(\{u=0\}\cup \{t=0\}\right)$ and $P = Q\setminus \left(\{u=0\}\cup \{t=0\}\right)$.
\begin{definition}
A nonnegative function $u(x,t)$ is called a \emph{weak solution} to the Cauchy problem \eqref{1.eq:tfe}-\eqref{1.eq:id} if $u$ satisfies
\begin{align*}
    u\in C^{1/2,1/8}(\overline{Q})\cap L^\infty(0,\infty;H^1(\R))\cap C^\infty(P)\,,\\
    u^{1/2}u_{xxx}\in L^2(P)
    \quad \text{and}\quad\int_Q u\psi_t + \int_P u u_{xxx}\psi_x = 0
\end{align*}
for all Lipschitz functions $\psi$ with compact support inside $Q$. Moreover, 
\begin{equation*}
    u(\cdot,0) = u_0\quad\text{and}\quad u_x(\cdot,t)\to u_{0x} \text{ strongly in }L^2(\R) \text{ as }t\downarrow 0\,.
\end{equation*}
\end{definition}
\noindent The regularity of weak solutions has been studied in \cite{BBD95, BePu96} and in particular they proved the existence of strong solutions.
\begin{definition}
A weak solution to Cauchy problem \eqref{1.eq:tfe}-\eqref{1.eq:id} which additionally satisfies 
\begin{equation*}
    u\in L^2(0,T;H^2(\R))
\end{equation*}
for any $T>0$ is called \emph{strong solution}.
\end{definition}
\noindent The existence of strong solutions to the Cauchy problem \eqref{1.eq:tfe}-\eqref{1.eq:id} with initial data of the finite second moment was proved in \cite{CaTo02}.  


Strong solutions satisfy additional regularity \cite{Ber96, CaTo02}: for any $T>0$ and $0 < s <1/2$, 
\begin{align*}
    u^{1-s/2} &\in L^2(0,T;H^2(\R))\,,\\
    (u^r)_x & \in L^4(Q_T) \quad\text{for any }1/2 - s/4 \leq r <1\,,
\end{align*}
and equation \eqref{1.eq:tfe} is satisfied in the following sense
\begin{equation*}
\int_Q u\psi_t - \int_Q uu_{xx}\psi_{xx} - \int_Q u^{1-r}\left(\frac{u^r}{r}\right)_xu_{xx}\psi_x = 0
\end{equation*}
for all $\psi\in C_c^\infty(Q)$. Furthermore, since $u(\cdot,t)\in C^1(\R)$ for a.e.~$t$, it follows that $u_x(\cdot,t)=0$ on the boundary of the support of $u$.
Strong solutions also preserve mass, i.e.
\begin{equation*}
    \int_\R u(t)\,\dd x = \int_\R u_0\,\dd x \quad \text{for all $t>0$}\,,
\end{equation*}
and dissipate the surface-tension energy 
$$\E(u) = \frac12\int_{\R}u_x^2\dd x\,$$ according to
\begin{align*}
    \E(u(T))+\int_{P_T} u u_{xxx}^2 \leq \E(u_0)\,,
\end{align*}
for any $T>0$. In particular, for strong solutions we have equality between the generalized Fisher information and the surface-tension energy, i.e.~for a.e.~$t$ it holds $\I_{3/2}(u(t)) = \E(u(t))$. 
Furthermore, strong solutions dissipate the $\alpha$-functionals, often called entropies, in the following way
\begin{align*}
    \frac{1}{\alpha(\alpha-1)}\frac{\dd}{\dd t}\int_{\R}u^\alpha\, \dd x = - \int_{\R\cap\{u>0\}}u^{\alpha-1}u_{xx}^2\,\dd x - \frac{(\alpha-1)(2-\alpha)}{3}\int_{\R\cap\{u>0\}}u^{\alpha-3}u_{x}^4\,\dd x\,,
\end{align*}
for all $1/2 < \alpha < 2$ and $\alpha\neq 1$. The case of $\alpha=1$ corresponds to the dissipation of the Boltzmann entropy (with the opposite sign)
\begin{equation*}
    \frac{\dd}{\dd t}\int_\R u \ln u\, \dd x = -\int_\R u_{xx}^2\, \dd x \leq 0\,.
\end{equation*}
Finally, the following integration by parts formula holds
\begin{equation}\label{3.eq:intbyparts}
    \int_{\R\cap\{u>0\}}u^{\alpha-1}u_{xx}u_x^2\, \dd x = \frac{1-\alpha}{3} \int_{\R\cap\{u>0\}}u^{\alpha-2}u_x^4\, \dd x\qquad \text{for a.e. }t>0\,.
\end{equation}

The long-term behavior of strong solutions to the Cauchy problem \eqref{1.eq:tfe}-\eqref{1.eq:id} has been first studied by Bernis in \cite{Ber96}. He showed that the surface-tension energy $\E(u(t))$ decays as $O(t^{-3/5})$ and the $L^\infty$-norm of solutions decays as $O(t^{-1/5})$ for $t\to \infty$. These decay rates turn out to be optimal, as they exactly match those of source-type solutions found by Smyth and Hill \cite{SmHi88}. 
The latter was improved in \cite[Theorem 5.1]{CaTo02} by showing that for a non-negative initial data $u_0\in H^1(\R)$ of vanishing first and finite second moment, there exists a constant $C>0$ such that
        \begin{equation*}
            \|u(x,t) - U(x,t)\|_{L^1(\R)} \leq C t^{-1/5}\,\quad\text{for all $t>1$}\,,
        \end{equation*}
where $U(x,t)$ is the unique strong self-similar solution to \eqref{1.eq:tfe} defined by \eqref{1.def:sssts}. The constant $C$ was explicitly calculated and in fact the result was given for the non-vanishing first moment.


\subsubsection{Justification of Theorem \ref{3.tm} for $p=3/2$}
The second moment $\M(u(t))$ is absolutely continuous function on $[0,+\infty)$ and its evolution along solutions to \eqref{1.eq:tfe} equals 
\begin{align*}
    \frac{\dd }{\dd t}\M(u(t)) &= 
     3\int_\R u_{x}^2\dd x= 6\, \E(u(t))\,.
\end{align*}
Rescaling a strong solution $u$ via the change of variables \eqref{1.def:v} and \eqref{1.def:tau} leads to the nonlocal thin-film equation
\begin{equation}\label{3.eq:v_tf}
        \pa_\tau v = -\frac{1}{\E(v)}\left(v^{3/2}\left(v^{1/2}-1\right)_{yy}\right)_{yy} + \frac{3}{\M_0}(y v)_y\,,
    \end{equation}
with strong solutions $v(y,\tau)$ of analogous regularity properties (cf.~\cite{CaTo02}).  
Let us justify the evolution of the relative R\'enyi entropy of order $3/2$,
\begin{align}\label{3.eq:rendiss_tf}
    \frac{\dd}{\dd \tau}\renyi_{3/2}(v(\tau)|B_{3/2}) &=  -\frac{3}{2\int_\R v^{3/2}\, \dd y} \frac{\K_{3/2}(v)}{\E(v)}  + \frac{3}{\M_0}\,,
\end{align}
where
\begin{equation*}
    \K_{3/2}(v) = 4\int_\R v^{3/2}\left(v^{1/2}\right)_{yy}^2\, \dd y\,.
\end{equation*}
First, we argue that
\begin{equation}\label{3.eq:KK}
    \K_{3/2}(v) = \int_{\R}v^{1/2}v_{yy}^2\, \dd y + \frac{1}{12}\int_{\R\cap\{v>0\}}v^{-3/2}v_y^4\, \dd y\,.
\end{equation}
Integrals on the right-hand side are finite for strong solutions and the equality follows by the integration by parts formula \eqref{3.eq:intbyparts} for $\alpha=1/2$. Second, the following integration by parts formula holds for strong solutions \cite{CaTo02},
\begin{align*}
    -\frac{3}{2} \int_\R y v^{1/2}v_y \,\dd y
    = \int_\R v^{3/2}\,\dd y\,,
\end{align*}
which justifies the calculation of the integral of the convective term.
Therefore, \eqref{3.eq:rendiss_tf} holds for strong solutions of equation \eqref{3.eq:v_tf}. Since Lemma \ref{2.lem:villp} is also valid for the strong solutions in the sense of \eqref{3.eq:KK}, we conclude that Theorem \eqref{3.tm} holds for strong solutions of the Cauchy problem \eqref{1.eq:tfe}-\eqref{1.eq:id}.


\subsection{The DLSS equation}\label{sec:dlss} For the DLSS equation, we also carry out formal calculations. It is known from the seminal work \cite{BLS94} that the global existence of smooth positive solutions is equivalent to the preservation of positivity. Under periodic boundary conditions, a sufficient condition ensuring such solutions—based on the smallness of the initial data—was established in \cite{CCT05}. For the Cauchy problem on the real line, however, this question remains open.

The rescaled DLSS equation reads
\begin{equation}\label{3.eq:dlss_res}
        \pa_\tau v = -\frac{1}{\I(v)}\left(v\left(\ln v\right)_{yy}\right)_{yy} + \frac{2}{\M_0}(y v)_y\,,
    \end{equation}
where $\I(v) = \I_1(v)$ denotes the Fisher information.
For $p=1$, the relative R\'enyi entropy becomes the relative Boltzmann entropy
\begin{equation*}
    \entropy(f|B_{1,f}) = \entropy(B_{1,f}) - \entropy(f) = \frac{1}{2}\ln(2\pi e\M_0) - \entropy(f)\,,
\end{equation*}
where 
\begin{equation*}
    \entropy(f) = -\int_{\R}f\ln f\, \dd x\,.
\end{equation*}
Then the evolution of the relative Boltzmann entropy along smooth, positive solutions to \eqref{3.eq:dlss_res} is given by
\begin{align}\label{3.eq:diss_relB}
    \frac{\dd}{\dd \tau}\entropy(v(\tau)|B_1) = -\frac{1}{\I(v)} \int_\R v\left(\ln v\right)_{yy}^2\, \dd y + \frac{2}{\M_0}\,.
\end{align}
Using the inequality \eqref{2.ineq:KIp} (Lemma \ref{2.lem:villp}) and the fact that
\begin{equation*}
    \frac{2}{\M_0} = 4 \I(B_1)
\end{equation*}
we estimate \eqref{3.eq:diss_relB} as
\begin{align}\nonumber
    \frac{\dd}{\dd \tau}\entropy(v(\tau)|B_1) &\leq -4\I(v) + 4\I(B_1) = -\frac{2}{\M_0}\left(\frac{\I(v)}{\I(B_1)} - 1\right).\label{3.ineq:rendiss3}
\end{align}
Using the entropy power inequality \eqref{2.ineq:epi_p} (Lemma \ref{2.lem:epip}) and identity \eqref{2.eq:expreny} it follows 
\begin{equation*}
    \frac{\I(v)}{\I(B_1)}\geq \exp(2\entropy(v|B_1))\,.
\end{equation*} 
Therefore,
\begin{align*}
    \frac{\dd}{\dd \tau}\entropy(v(\tau)|B_1) &\leq -\frac{2}{\M_0}\left(\exp(2\entropy(v|B_1)) - 1\right)\,.
\end{align*}
As before, this differential inequality can be solved explicitly by separation of variables and one obtains
\begin{equation*}
    \entropy(v(\tau)|B_1) \leq -\frac{1}{2}\ln\left(1 - \left(1 - \exp({-2\entropy(v_0|B_1)})\right)\exp\left(-\frac{4}{\M_0}\tau\right)\right)\,,\quad \tau > 0\,.
\end{equation*}
Going back to the original variables \eqref{1.def:v} and \eqref{1.def:tau} we obtain
    \begin{align*}
        \entropy(u(t)|U_{1}(t)) \leq  -\frac{1}{2}\ln\left(1 - \left(1 - \exp({-2\entropy(u_0|B_p)})\right)\frac{\M_0}{\M(u(t))}\right)\,,\quad t > 0\,,
    \end{align*}
    where $U_{1}(t)$ is the rescaled Gaussian profile having the same second moment as $u(t)$, i.e.
    \begin{equation*}
        U_1(x,t) = \frac{1}{\sqrt{2\pi\M_t}}\exp\left(-\frac{x^2}{2\M_t}\right)\,.
    \end{equation*}
Finally, the Csisz\'ar-Kullback inequality asserts the following theorem.
\begin{theorem}\label{3.tm_dlss}
   Let $u$ be a smooth positive solution of the Cauchy problem for the DLSS equation \eqref{1.eq:dlss_log} with initial data $u_0$ satisfiying
    \begin{equation*}
        \int_\R u_0\,\dd x = 1\,,\quad \int_\R  xu_0\,\dd x = 0\quad \text{and}\quad \int_\R x^2u_0\,\dd x = \M_0\,,
    \end{equation*}
    and $\entropy(u_0) < +\infty$. Then, there exists a constant $C>0$ independent of $u$, such that
    \begin{equation*}
        \|u(t) - U_1(t)\|_{L^1(\R)} \leq C\left(\entropy(u(t)|U_{1}(t))\right)^{1/2}\,\quad \text{for all }\ t>0\,.
    \end{equation*}
\end{theorem}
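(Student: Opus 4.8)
The plan is to read off the statement as an immediate corollary of the entropy--dissipation analysis carried out just above, in exact parallel with the proof of Theorem~\ref{3.tm} for $1<p\le 3/2$ but specialized to $p=1$. All of the substantive work --- deriving the differential inequality for $\entropy(v(\tau)|B_1)$, solving it explicitly by separation of variables, and transferring the resulting bound to the original variables through \eqref{1.def:v}--\eqref{1.def:tau} to obtain the displayed estimate on $\entropy(u(t)|U_1(t))$ --- has already been completed, so the only missing step is the passage from the relative Boltzmann entropy to the $L^1$-distance.

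First I would observe that, since \eqref{1.eq:dlss_log} conserves mass and the rescaled Gaussian $U_1(t)$ is constructed to have the same second moment $\M_t$ as $u(t)$, the relative Boltzmann entropy is precisely the Kullback--Leibler divergence,
\begin{equation*}
    \entropy(u(t)|U_1(t)) = \entropy(U_1(t)) - \entropy(u(t)) = \int_{\R} u(t)\,\ln\!\frac{u(t)}{U_1(t)}\,\dd x \geq 0\,,
\end{equation*}
the Gaussian normalization constant and the quadratic terms cancelling exactly because the two densities carry the same mass and the same second moment; the hypothesis $\entropy(u_0)<+\infty$ makes this quantity finite at $\tau=0$, so the explicit decay bound established above is non-vacuous. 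Then I would invoke the Csisz\'ar--Kullback(--Pinsker) inequality \cite{CJMTU01} in the form $\|f-g\|_{L^1(\R)}^2\le C\,\entropy(f|g)$ for probability densities $f,g$, applied with $f=u(t)$ and $g=U_1(t)$, which yields $\|u(t)-U_1(t)\|_{L^1(\R)}\le C\,\entropy(u(t)|U_1(t))^{1/2}$ with a universal constant, as claimed. Equivalently, one may argue in the rescaled variables, where $\entropy(v(\tau)|B_1)$ obeys the super-exponential decay estimate, and then use that both the $L^1$-norm and the relative entropy are invariant under the dilation \eqref{1.def:v}.

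At the level of this theorem there is essentially nothing hard left: once the preceding computation is granted, the conclusion is one application of a classical inequality. The point worth flagging --- which I would state rather than attempt to remove --- is that the entire chain is formal. It presupposes a global, smooth, strictly positive, sufficiently rapidly decaying solution of the Cauchy problem for \eqref{1.eq:dlss_log} on $\R$, whose existence is open (cf.\ \cite{BLS94,CCT05}); precisely that regularity and spatial decay is what legitimizes the integrations by parts leading to \eqref{3.eq:diss_relB} and the applicability of Lemma~\ref{2.lem:villp}. Granting it, the estimates of this subsection combine into the stated bound with no further obstacle.
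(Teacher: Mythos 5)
Your proposal is correct and follows exactly the route of the paper: the theorem is obtained by combining the formal entropy--dissipation chain for $\entropy(v(\tau)|B_1)$ already derived in Section~\ref{sec:dlss} with one application of the Csisz\'ar--Kullback inequality in the original variables. Your extra observation that $\entropy(u(t)|U_1(t))$ coincides with the Kullback--Leibler divergence because $u(t)$ and $U_1(t)$ share the same mass and second moment is a useful explicit justification of that last step, and your caveat about the formal nature of the computations (open existence of global smooth positive solutions) matches the paper's own stance.
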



\end{document}